\newtheorem{theorem}{Theorem}
\newtheorem{corollary}{Corollary}
\newtheorem{lemma}{Lemma}
\title{On elementary estimates for the partition function}
\author{MIZUKI AKENO}
\email{akeno.mizuki.tkb\_ee@u.tsukuba.ac.jp}
\date{\today}
\address{Department of Mathematics, University of Tsukuba, Tsukuba, Japan}
\keywords{Asymptotic formulas, partition functions, generating functions}
\subjclass[2020]{11P21, 11P81}
\begin{document}

\begin{abstract}
In this paper, we obtain upper and lower bounds for the partition function $p(n)$ by using an elementary geometric inequality in Euclidean space, and we extend the method to generalizations of the partition function.
\end{abstract}

\maketitle

\section{Introduction}
Let $p(n)$ denote the partition function, i.e. the number of partitions of the nonnegative integer $n$. 
The generating function of $p(n)$ is
\begin{equation} \sum_{n=0}^{\infty} p(n) z^n = \prod_{n=1}^{\infty} (1-z^{n})^{-1}. \end{equation}
Using Cauchy's residue theorem together with an asymptotic expansion of the generating function, Hardy and Ramanujan \cite{hardy-ramanujan} proved
\begin{equation} p(n) = \frac{e^{\pi\sqrt{\frac{2}{3}n}}}{4n\sqrt{3}} (1+o(1)) = \frac{2\pi}{(24n)^{\frac{3}{4}}}I_{0}{(2\sqrt{\zeta{(2)} n})}(1+o(1)), \quad (n \to \infty) \end{equation}
where $I_{\alpha}(z)$ is the modified Bessel function of the first kind defined for $z \in \mathbb{C}$ by 
\begin{equation} I_{\alpha}(z) = \sum_{n=0}^{\infty} \frac{1}{n! \Gamma{(n + \alpha + 1)}} \left( \frac{z}{2} \right)^{2n+\alpha}. \end{equation}
The asymptotic behavior of $I_{\alpha}$ is known to be
\begin{equation} \label{Bessel asy} I_{\alpha}(x) = \frac{e^x}{\sqrt{2 \pi x}}(1 + o(1)), \quad (x \to \infty). \end{equation}
In this paper, we obtain the following explicit upper and lower bounds for $\sum_{n \leq N} p(n)$ and its variants using elementary methods. 
\begin{theorem}\label{P1}
For all $N \in \mathbb{Z}_{\geq 1}$, we have
\begin{equation} \label{goal1} e^{-H_N} I_0 (2 \sqrt{\zeta_{N}{(2)}N}) \leq \sum_{n=0}^{N} p (n) \leq I_0 (2 \sqrt{\zeta_{N}{(2)}N}), \end{equation}
\end{theorem}
Here, $\zeta_{N}{(s)}$ denotes the truncated zeta function $\sum_{n=1}^{N} \frac{1}{n^s}$, $H_N$ denotes the harmonic number $\sum_{n=1}^{N} \frac{1}{n}$. 

Classical bounds for $p(n)$ can be found, for example, in Sections 2--3 of \cite{hardy-ramanujan} and Theorem 15.7 of \cite{van2001course}. 
These bounds rely on analytic properties of the generating function of $p(n)$. 
Our method depends only on combinatorial arguments and lattice-point/volume estimates for counting solutions to linear inequalities. 
A stronger estimate was independently obtained by Kane \cite{kane2006elementary}, whose method is similar in spirit to ours but technically more elaborate.

The strength of our approach is its flexibility. 
We obtain two generalizations of \eqref{goal1}, that include applications to bounds on well-known generalizations of the partition function, such as $q$-th power partition \cite{wrightIII} and plane partition \cite{wrightI}. 
As a simple illustration, for each $X \geq 1$ we obtain explicit upper and lower bounds on the number of nonnegative integer solutions $(n_i)$ satisfying
\[ n_1 + n_2 \sqrt{2} + n_3 \sqrt{3} + n_4 \sqrt{4} + \cdots \leq X. \]

\section{Bounds for partition function}

\subsection{Sum of the partition function}

We begin by giving a short proof of Theorem \ref{P1}.

All infinite sums and products in $\mathbb{R}[[z]]$ are interpreted coefficientwise, that is, they are defined by coefficientwise limits. 
We define 
\[ e^{f} = \sum_{n=0}^{\infty} \frac{f^n}{n!} \]
for all $f \in \mathbb{R}[[z]]$. 
Note that $\sum_{n=0}^{\infty} [z^k] f^n/n!$ converges absolutely for each $k$. 
Moreover, $e^{f+g} = e^{f} e^{g}$ for all $f,g \in \mathbb{R}[[z]]$.

\begin{proof}[Proof of Theorem \ref{P1}]

Let $f$ be given by
\begin{equation*} f(z) = \prod_{n=1}^{\infty} {\left(1- z^{n} \right)}^{-1} = \sum_{n=0}^{\infty} p {(n)} z^n. \end{equation*}
Let $w_k=w_k(z) = \sum_{n=1}^{\infty} z^{k n}$. 
For $z \in \mathbb{C}$ with $|z|<1$, one has
\begin{align*}
f(z) &= \exp{\left( \sum_{n=1}^{\infty} -\ln{(1-z^{n})} \right)}, \\
&= \exp{\left( \sum_{n=1}^{\infty} \sum_{k=1}^{\infty} \frac{z^{n k}}{k} \right)}, \\
&= \exp{\left( \sum_{k=1}^{\infty} \frac{w_k}{k} \right)}.
\end{align*}
Hence the same identity holds in $\mathbb{R}[[z]]$. Since $[z^n]w_k=0$ for all $n \leq N$ if $k>N$, we can express the partial sum of the partition function in the form
\[ \sum_{n=0}^{N} p (n) = I[f(z)](N) = I\left[\exp{\left(\sum_{k=1}^{\infty} \frac{w_k}{k}\right)}\right](N) = I\left[\exp{\left(\sum_{k=1}^{N} \frac{w_k}{k}\right)}\right](N), \]
where $I[\cdot](N)$ denotes the sum of coefficients of terms of degree $\leq N$. 
Expanding the exponential, we have 
\[ \exp{\left(\sum_{k=1}^{N} \frac{w_k}{k}\right)} = \prod_{k=1}^{N} \left( \sum_{n=0}^{\infty} \frac{1}{n!} \left(\frac{w_k}{k}\right)^n \right) = \sum_{\bm{r} \in \mathbb{Z}_{\geq 0}^N} c_{\bm{r}} \prod_{k=1}^{N} w^{r_k}_k \]
for some non-negative coefficients $(c_{\bm{r}})_{\bm{r}}$. We also have
\[ e^{H_N} \exp{\left(\sum_{k=1}^{N} \frac{w_k}{k}\right)} = \exp{\left(\sum_{k=1}^{N} \frac{1+w_k}{k}\right)} = \sum_{\bm{r} \in \mathbb{Z}_{\geq 0}^N} c_{\bm{r}} \prod_{k=1}^{N} (1+w_k)^{r_k}. \]
We therefore obtain
\begin{equation} \label{P_interc} \sum_{n=0}^{N} p (n) = \sum_{\bm{r} \in \mathbb{Z}_{\geq 0}^N} c_{\bm{r}} I \left[ \prod_{k=1}^{N} w^{r_k}_k \right] (N), \quad e^{H_N} \sum_{n=0}^{N} p (n) = \sum_{\bm{r} \in \mathbb{Z}_{\geq 0}^N} c_{\bm{r}} I \left[ \prod_{k=1}^{N} (1+w_k)^{r_k} \right] (N). \end{equation}

We now fix $r_1,\ldots,r_N \in \mathbb{Z}_{\geq 0}, r_1+\cdots+r_N=r \geq 1$ and observe that
\[ I\left[ \prod_{k=1}^{N} w_k^{r_k} \right](N) = \#\{ (x^{(k)}_{i})_{\substack{1 \leq k \leq N \\ 1 \leq i \leq r_k}} \in \mathbb{Z}_{\geq 1}^r: \sum_{1 \leq k \leq N} \sum_{1\leq i \leq r_k} k x^{(k)}_{i} \leq N \}, \]
\[ I\left[ \prod_{k=1}^{N} (1+w_k)^{r_k} \right](N) = \#\{ (x^{(k)}_{i})_{\substack{1 \leq k \leq N \\ 1 \leq i \leq r_k}} \in \mathbb{Z}_{\geq 0}^r: \sum_{1 \leq k \leq N} \sum_{1\leq i \leq r_k} k x^{(k)}_{i} \leq N \}. \]
Using the relation between the volume of a subset of $\mathbb{R}^{r}$ and the number of lattice points inside it, we have
\begin{equation} \label{P_ineq} I\left[ \prod_{k=1}^{N} w_k^{r_k} \right](N) \leq V \leq I\left[ \prod_{k=1}^{N} (1+w_k)^{r_k} \right](N) \end{equation}
where
\[ V = \mathrm{vol}\left( \{ (x^{(k)}_{i})_{\substack{1 \leq k \leq N \\ 1 \leq i \leq r_k}} \in \mathbb{R}^r_{\geq 0} : \sum_{1 \leq k \leq N} \sum_{1\leq i \leq r_k} k x^{(k)}_{i} \leq N \} \right) = \frac{1}{r!} \prod_{k=1}^{N} \left( \frac{N}{k} \right)^{r_k}. \]

For all $M \in \mathbb{N}$, one has
\begin{align*}
\sum_{\substack{\bm{r} \in \mathbb{Z}_{\geq 0}^N \\ r_1+ \cdots + r_N \leq M}} c_{\bm{r}} \frac{1}{(r_1+\cdots+r_N)!} \prod_{k=1}^{N} \left( \frac{N}{k} \right)^{r_k} &= \sum_{r=0}^{M} \frac{1}{r!} \sum_{\substack{\bm{r} \in \mathbb{Z}_{\geq 0}^N \\ r_1+ \cdots + r_N = r}} c_{\bm{r}} \prod_{k=1}^{N} \left( \frac{N}{k} \right)^{r_k} \\
&= \sum_{r=0}^{M} \frac{1}{r!} [x^r] \exp\left( \sum_{k=1}^{N} \frac{1}{k} \cdot \frac{xN}{k} \right) \\
&= \sum_{r=0}^{M} \frac{1}{r!} \frac{(N\zeta_N(2))^r}{r!}. \\
\end{align*}
This and \eqref{P_ineq} give
\[ \sum_{\substack{\bm{r} \in \mathbb{Z}_{\geq 0}^N \\ r_1+ \cdots + r_N \leq M}} c_{\bm{r}} I \left[ \prod_{k=1}^{N} w^{r_k}_k \right](N) \leq \sum_{r=0}^{M} \frac{(N\zeta_N(2))^r}{(r!)^2} \leq \sum_{\substack{\bm{r} \in \mathbb{Z}_{\geq 0}^N \\ r_1+ \cdots + r_N \leq M}} c_{\bm{r}} I \left[ \prod_{k=1}^{N} (1+w_k)^{r_k} \right](N). \]
Letting $M \to \infty$ and recalling \eqref{P_interc} completes the proof. 

\end{proof}

\subsection{Framework}

Let
\[ \mathbb{R}\{x_1,\ldots,x_n\}_{\infty} = \{ \sum_{\bm{r} \in \mathbb{Z}^n_{\geq 0}} c_{\bm{r}} \prod_{k=1}^{n} x^{r_k}_k \in \mathbb{R}[[x_1,\ldots,x_n]] \mid \forall x \in (0,\infty), \sum_{\bm{r} \in \mathbb{Z}^n_{\geq 0}} |c_{\bm{r}}| x^{r_1+\cdots+r_n} < \infty \}. \]
Let $\mathfrak{I}_n$ denote the set of linear operators $I:\mathbb{R}\{x_1,\ldots,x_n\}_{\infty} \to \mathbb{R}$ that satisfy the following properties:
\begin{itemize}
\item There exists $C>0$ and
\[ \left| I\left[\prod_{k=1}^{n} x^{r_k}_k\right] \right| \leq C^{r_1+\cdots+r_n} \]
holds for all $r_1,\ldots,r_n \in \mathbb{Z}_{\geq 0}$.  
\item For all $\sum_{\bm{r} \in \mathbb{Z}^n_{\geq 0}} c_{\bm{r}} \prod_{k=1}^{n} x^{r_k}_k \in \mathbb{R}\{x_1,\ldots,x_n\}_{\infty}$, one has
\[ \left| I\left[\sum_{\bm{r} \in \mathbb{Z}^n_{\geq 0}} c_{\bm{r}} \prod_{k=1}^{n} x^{r_k}_k \right] \right| \leq \sum_{\bm{r} \in \mathbb{Z}^n_{\geq 0}} |c_{\bm{r}}| \left| I\left[\prod_{k=1}^{n} x^{r_k}_k\right] \right|. \]
\end{itemize}

\begin{lemma}\label{L_I}
Let $M \in \mathbb{Z}_{\geq 1}, I,I' \in \mathfrak{I}_M$, and let $\sum_{\bm{r} \in \mathbb{Z}^M_{\geq 0}} c_{\bm{r}} \prod_{k=1}^{M} x^{r_k}_k \in \mathbb{R}\{x_1,\ldots,x_M\}_{\infty}$, where $c_{\bm{r}} \geq 0$ for all $\bm{r}$. 
If
\[ 0 \leq I\left[ \prod_{k=1}^{M} x^{r_k}_k \right] \leq I'\left[ \prod_{k=1}^{M} x^{r_k}_k \right] \]
holds for all $r_1,\ldots,r_M \in \mathbb{Z}_{\geq 0}$, then
\[ I\left[\sum_{\bm{r} \in \mathbb{Z}^M_{\geq 0}} c_{\bm{r}} \prod_{k=1}^{M} x^{r_k}_k \right] \leq I'\left[\sum_{\bm{r} \in \mathbb{Z}^M_{\geq 0}} c_{\bm{r}} \prod_{k=1}^{M} x^{r_k}_k \right]. \]
Let $(a_k) \in \mathbb{R}^M_{\geq 0}$. If 
\[ 0 \leq I'\left[ \prod_{k=1}^{M} x^{r_k}_k \right] \leq I\left[ \prod_{k=1}^{M} (a_k+x_k)^{r_k}\right] \]
holds for all $r_1,\ldots,r_M \in \mathbb{Z}_{\geq 0}$, then
\[ I'\left[\sum_{\bm{r} \in \mathbb{Z}^M_{\geq 0}} c_{\bm{r}} \prod_{k=1}^{M} x^{r_k}_k \right] \leq I\left[\sum_{\bm{r} \in \mathbb{Z}^M_{\geq 0}} c_{\bm{r}} \prod_{k=1}^{M} (a_k+x_k)^{r_k} \right]. \]
\end{lemma}

\begin{proof}

The function $\sum_{\bm{r} \in \mathbb{Z}^M_{\geq 0}} c_{\bm{r}} \prod_{k=1}^{M} x^{r_k}_k$ is holomorphic on $(x_i) \in \mathbb{C}^M$. Thus we see that
\[ \sum_{\bm{r} \in \mathbb{Z}^M_{\geq 0}} c_{\bm{r}} \prod_{k=1}^{M} (a_k+x_k)^{r_k} = \sum_{\bm{r} \in \mathbb{Z}^M_{\geq 0}} d_{\bm{r}} \prod_{k=1}^{M} x^{r_k}_k \in \mathbb{R}\{x_1,\ldots,x_M\}_{\infty} \]
for some $(d_{\bm{r}})_{\bm{r}}$ satisfying $d_{\bm{r}}\geq 0$. 

To see the final inequality, it suffices to show that
\begin{equation} I\left[\sum_{\bm{r} \in \mathbb{Z}^M_{\geq 0}} c_{\bm{r}} \prod_{k=1}^{M} x^{r_k}_k \right] = \sum_{\bm{r} \in \mathbb{Z}^M_{\geq 0}} c_{\bm{r}} I\left[\prod_{k=1}^{M} x^{r_k}_k\right] \label{P_I1}, \end{equation}
\begin{equation} I\left[\sum_{\bm{r} \in \mathbb{Z}^M_{\geq 0}} c_{\bm{r}} \prod_{k=1}^{M} (a_k+x_k)^{r_k} \right] = \sum_{\bm{r} \in \mathbb{Z}^M_{\geq 0}} c_{\bm{r}} I\left[\prod_{k=1}^{M} (a_k+x_k)^{r_k} \right] \label{P_I2}. \end{equation}
The equation \eqref{P_I1} follows from the definition of $\mathfrak{I}_M$. 
By \eqref{P_I1} we have
\[ I\left[\sum_{\bm{r} \in \mathbb{Z}^M_{\geq 0}} c_{\bm{r}} \prod_{k=1}^{M} (a_k+x_k)^{r_k} \right] = \sum_{\bm{r} \in \mathbb{Z}^M_{\geq 0}} d_{\bm{r}} I\left[\prod_{k=1}^{M} x_k^{r_k} \right]. \]
Comparing this with the expansion of $\prod_{k=1}^{M} (a_k+x_k)^{r_k}$ in the right hand side of \eqref{P_I2}, gives \eqref{P_I2}. 
\end{proof}

We define $I(\cdot)(N),I'(\cdot)(N):\mathbb{R}\{w_1,\ldots,w_N\}_{\infty} \to \mathbb{R}$ by
\[ I\left[\sum_{\bm{r} \in \mathbb{Z}^N_{\geq 0}} c_{\bm{r}} \prod_{k=1}^{N} w^{r_k}_k \right](N) = \sum_{n=0}^{N} [z^n] \sum_{\bm{r} \in \mathbb{Z}^N_{\geq 0}} c_{\bm{r}} \prod_{k=1}^{N} \left( \sum_{n=1}^{\infty} z^{nk} \right)^{r_k}, \]
\[ I'\left[\sum_{\bm{r} \in \mathbb{Z}^N_{\geq 0}} c_{\bm{r}} \prod_{k=1}^{N} w^{r_k}_k \right](N) = \sum_{\bm{r} \in \mathbb{Z}^N_{\geq 0}} c_{\bm{r}} \frac{1}{(r_1+\cdots+r_N)!} \prod_{k=1}^{N} \left( \frac{N}{k} \right)^{r_k}. \]
Clearly $I' \in \mathfrak{I}_N$. We also have $I \in \mathfrak{I}_N$, by the following lemma.

\begin{lemma}\label{L_Id}
Let $M,N \in \mathbb{Z}_{\geq 1}$, $w_k(z) \in \mathbb{R}[[z]], k=1,\ldots,M$. Let $I:\mathbb{R}\{w_1,\ldots,w_M\}_{\infty} \to \mathbb{R}$ be given by
\[ I\left[\sum_{\bm{r} \in \mathbb{Z}^M_{\geq 0}} c_{\bm{r}} \prod_{k=1}^{M} w^{r_k}_k \right] = \sum_{n=0}^{N} [z^n] \sum_{\bm{r} \in \mathbb{Z}^M_{\geq 0}} c_{\bm{r}} \prod_{k=1}^{M} w^{r_k}_k(z). \]
Then $I \in \mathfrak{I}_M$. 
\end{lemma}

\begin{proof}
Let $w_k(z) = \sum_{n=0}^{\infty} a_{n,k} z^n$ and set $A=\max_{\substack{0 \leq n \leq N \\ 1 \leq k \leq M}} |a_{n,k}|$. Then, we see that
\[ \left| I\left[\prod_{k=1}^{M} w^{r_k}_k \right] \right| \leq \sum_{n=0}^{N} [z^n] \prod_{k=1}^{M} \left( A \sum_{m=0}^{\infty} z^m \right) \leq (A(N+1))^{r_1+\cdots+r_M} \]
for all $r_1,\ldots,r_M \in \mathbb{Z}_{\geq 0}$. This gives $I \in \mathfrak{I}_M$. 
\end{proof}

The inequality \eqref{P_ineq} can be written as
\[ I\left[ \prod_{k=1}^{N} w_k^{r_k} \right](N) \leq I'\left[ \prod_{k=1}^{N} w_k^{r_k} \right](N) \leq I\left[ \prod_{k=1}^{N} (1+w_k)^{r_k} \right](N) \]
and \eqref{goal1} can be written as
\[ I\left[\exp{\left(\sum_{k=1}^{N} \frac{w_k}{k}\right)}\right](N) \leq I'\left[\exp{\left(\sum_{k=1}^{N} \frac{w_k}{k}\right)}\right](N) \leq I\left[\exp{\left(\sum_{k=1}^{N} \frac{1+w_k}{k}\right)}\right](N). \]
Thus, the proof of Theorem \ref{P1} can be interpreted as an application of Lemma \ref{L_I}. 
In practice, it is convenient to calculate $I'$ using the identity $I'[\cdot](N)=J[K[\cdot]]$ where $J:\mathbb{R}\{x\}_{\infty} \to \mathbb{R}, K:\mathbb{R}\{w_1,\ldots,w_N\}_{\infty} \to \mathbb{R}\{x\}_{\infty}$ is defined as
\[ J\left[\sum_{n=0}^{\infty} c_n x^n \right] = \sum_{n=0}^{\infty} \frac{c_n}{n!}, \quad K\left[ f(w_1,\ldots,w_N) \right] = f(Nx/1,Nx/2,\ldots,Nx/N), \]
as we used implicitly in the proof of Theorem \ref{P1}.

By replacing the generating function $\exp{\left(\sum_{k=1}^{N} \frac{w_k}{k}\right)}$, we can produce some other inequalities involving $p(n)$. 
For example, by 
\[ \sum_{n=1}^{\infty} z^{dn} = w_d ,\quad \sum_{n=1}^{\infty} d(n)z^n = \sum_{k=1}^{\infty} w_k, \]
the upper and lower bounds for 
\[ \sum_{n=0}^{N} (N-n) p(n) = \sum_{\substack{m+n \leq N \\ 0<m \\ 0 \leq n}} p(n), \quad \sum_{\substack{n_1 + \cdots + n_r \leq N \\ 0 \leq n_i}} p(n_1) \cdots p(n_r), \quad and \quad \sum_{\substack{n+m \leq N \\ 0<m \\ 0 \leq n }} p(n) d(m) \]
can be obtained by applying this method to 
\[ f(z) = w_1 \exp{\left(\sum_{k=1}^{\infty} \frac{w_k}{k}\right)}, \quad \left( \exp{\left(\sum_{k=1}^{\infty} \frac{w_k}{k}\right)} \right)^r, \quad \left(\sum_{k=1}^{\infty} w_k \right) \exp{\left(\sum_{k=1}^{\infty} \frac{w_k}{k}\right)}. \]
We use this fact to obtain upper and lower bounds for $p(n)$ in the next subsection.

Furthermore, by replacing $I(\cdot)(N)$ and $I'(\cdot)(N)$, we can produce some inequalities similar to Theorem \ref{P1}. 
We use this to estimate the sum of the generalized partition function in subsequent sections.

It might be possible to obtain an explicit formula
\[ \sum_{n=0}^{N} p(n) = I\left[\exp{\left(\sum_{k=1}^{\infty} \frac{w_k}{k}\right)}\right](N) = \sum_{n=1}^{\infty} I_n \left[\exp{\left(\sum_{k=1}^{\infty} \frac{w_k}{k}\right)}\right] \]
if we can establish
\[ I\left[\prod_{k=1}^{M} w^{r_k}_k\right] = \sum_{n=1}^{\infty} I_n\left[\prod_{k=1}^{M} w^{r_k}_{k}\right] \]
for some $I_n:\mathbb{R}[[w_{1},w_{2},w_{3} \cdots]] \to \mathbb{R}$, say, via Poisson summation.

The strategy of comparing linear operators is likely to be useful in other settings as well. 
As an example, we indicate a possible application of this method to the understanding of special functions arising from sieve theory. 
We do not attempt to make the argument fully rigorous here, and we omit the details. In what follows, we give only a rough outline of the argument.

Let $u>0$ and $x>2$. It is well-known that 
\begin{equation} \varPhi(x,x^{1/u}) \coloneqq \#\{ n \leq x : p|n \Rightarrow p>x^{1/u} \} = \frac{x}{\ln{x}}(u \omega(u)+o(1)) \quad (x \to \infty), \end{equation}
where $\omega$ is the Buchstab function. See Lemma 12.1 of \cite{friedlander2025opera} for instance. 

Let $P(s)=\sum_{p>x^{1/u}} p^{-s}$, $P'(s)=\int_{1/u}^{\infty} e^{-sv} dv/v$. Let $I,I':\mathbb{R}\{P\}_{\infty}\setminus \mathbb{R} \to \mathbb{R}$ be linear operators defined by
\[ I\left[\sum_{r=1}^{\infty} c_r P^r\right] = \sum_{1 \leq n \leq x} [n^{-s}] \sum_{r=1}^{\infty} c_r P^r(s), \]
\[ I'\left[\sum_{r=1}^{\infty} c_r P^r\right] = \frac{x}{\ln{x}} \frac{1}{2\pi i} \lim_{T \to \infty} \int_{c-iT}^{c+iT} \left( \sum_{r=1}^{\infty} c_r P'^r(s) \right) e^{s} ds, \]
where $[n^{-s}]F(s)$ denotes the coefficient of $n^{-s}$ in the Dirichlet series $F(s)$. 
Then we see that
\[ \varPhi(x,x^{1/u}) \approx I[e^P-1]. \]
By the prime number theorem, we may see that
\[ I[P^r] \approx \frac{x}{\ln{x}} \int_{\substack{v_1+\cdots+v_r=1 \\ v_i>1/u}} \frac{1}{v_1 \cdots v_r} dv = I'[P^r] \]
for all $r \geq 1$. Thus, one expects
\[ \varPhi(x,x^{1/u}) \approx I'[e^P-1] = \frac{x}{\ln{x}} \frac{1}{2\pi i} \lim_{T \to \infty} \int_{c-iT}^{c+iT} \left(\exp{\left( \int_{1/u}^{\infty} e^{-sv} \frac{dv}{v} \right)}-1\right) e^{s} ds. \]
In fact, it is known that
\[ u \omega(u) = \frac{1}{2\pi i} \lim_{T \to \infty} \int_{c-iT}^{c+iT} \left(\exp{\left( \int_{1/u}^{\infty} e^{-sv} \frac{dv}{v} \right)}-1\right) e^{s} ds. \]
See Section 11.5 of \cite{friedlander2025opera} for instance. 

\subsection{Partition function}

Using \eqref{goal1} and the simple inequality, $p(n) \leq \sum_{m=0}^{n} p (m)$ and $\sum_{m=0}^{n} p (m) \leq 1 + np(n)$, one has
\begin{equation} e^{-H_n} n^{-1} I_0 (2 \sqrt{\zeta_{n}{(2)}n}) - n^{-1} \leq p(n) \leq I_0 (2 \sqrt{\zeta_{n}{(2)}n}) \end{equation}
for all positive integers $n$. 
The upper bound can be improved to $e\zeta_{n}{(2)}^{\frac{1}{2}} n^{-\frac{1}{2}} I_0 (2 \sqrt{\zeta_{n}{(2)}n}) (1 + o(1))$; this follows from a more careful calculation using monotonicity of $p(n)$. 

\begin{comment}
The following holds as $n\to \infty$. 
$$p(n) \leq e\zeta_{n}{(2)}^{\frac{1}{2}} n^{-\frac{1}{2}} I_0 (2 \sqrt{\zeta_{n}{(2)}n}) (1 + o(1))$$
Let $c$ be a constant to be determined later and set $m = \lfloor c \sqrt{n} \rfloor$. By monotonicity of $p(n)$, one has
$$p(n) \leq \frac{1}{m} \left( \sum_{n < i \leq n+m} p(i) \right) \leq \frac{1}{m} \left( \sum_{i \leq n+m} p(i) \right) \leq \frac{1}{m} I_0 (2 \sqrt{\zeta{(2)}(n+m)}).$$
Using the asymptotic formula \eqref{Bessel asy} in conjunction with $\zeta_n{(2)} = \zeta{(2)} + O(n^{-1})$ and $(1+cn^{-\frac{1}{2}})^{1/2} = 1 + \frac{c}{2}n^{-\frac{1}{2}} + O(n^{-1})$, the right-hand side becomes 
\begin{eqnarray*}
\frac{1}{m} I_0 (2 \sqrt{\zeta{(2)}(n+m)}) &=& \frac{\exp{(2 \sqrt{\zeta_n{(2)}(n+m)})}}{m\sqrt{2\pi (2 \sqrt{\zeta_{n}{(2)}n})}}(1+o(1)) \\
&=& \frac{\exp{(2 \sqrt{\zeta_{n}{(2)}n} (1+cn^{-\frac{1}{2}})^{\frac{1}{2}})}}{cn^{\frac{1}{2}}\sqrt{2\pi (2 \sqrt{\zeta_{n}{(2)}n})}}(1+o(1)) \\
&=& \frac{e^{c\sqrt{\zeta{(2)}}}}{c} n^{-\frac{1}{2}} \frac{\exp{(2 \sqrt{\zeta_{n}{(2)}n})}}{\sqrt{2\pi (2 \sqrt{\zeta_{n}{(2)}n})}} (1 + o(1)). 
\end{eqnarray*}
Hence, if we set $c = (\sqrt{\zeta{(2)}})^{-1}$ to minimize $e^{c\sqrt{\zeta{(2)}}}c^{-1}$, we obtain the upper bound since
$$ \frac{\exp{(2 \sqrt{\zeta_{n}{(2)}n})}}{\sqrt{2\pi (2 \sqrt{\zeta_{n}{(2)}n})}} = I_0 (2 \sqrt{\zeta_{n}{(2)}n}) (1 + o(1))$$
by \eqref{Bessel asy}. \par
\end{comment}

It seems difficult to obtain an estimate for $a_n$ for a general sequence $(a_n)_{n \in \mathbb{Z}_{\geq 0}}$ by applying the method used in the proof of Theorem \ref{P1} to
\[ I[\sum_{n=1}^{\infty} (a_{n} - a_{n-1})z^n](N) = a_N - a_0. \]
But for some special cases such as the partition function, we can apply this method with minor modifications. We can actually obtain an estimate,
\[ e^{-H_n} \zeta_{n}{(2)}^{\frac{1}{2}} n^{-\frac{1}{2}} I_1(2\sqrt{\zeta_{n}{(2)} n})(1+o(1)) \leq p(n) \leq \zeta_{n}{(2)}^{\frac{1}{2}} n^{-\frac{1}{2}} I_1(2\sqrt{\zeta_{n}{(2)} n})(1+o(1)). \]

\begin{theorem}
For all $n \in \mathbb{Z}_{\geq 2}$, we have
\[ e^{H_n} p(n) \geq \zeta_{n}{(2)}^{\frac{1}{2}} n^{-\frac{1}{2}} I_1(2\sqrt{\zeta_{n}{(2)} n}) - (\zeta_{n}{(2)}-1)^{\frac{1}{2}} n^{-\frac{1}{2}} I_1(2\sqrt{(\zeta_{n}{(2)} -1)n}) \]
and
\[ p(n) \leq \zeta_{n}{(2)}^{\frac{1}{2}} n^{-\frac{1}{2}} I_1(2\sqrt{\zeta_{n}{(2)} n}) + I_0(2\sqrt{(\zeta_{n}{(2)}-1) n}). \]
\end{theorem}

\begin{proof}
Let $N \in \mathbb{Z}_{\geq 2}$ and $(w_k)$, $I,[\cdot](),I'[\cdot]()$ be defined as before. 

For all $f \in \mathbb{R}[[z]]$, we have
\[ [z^N]f(z) = \sum_{n=1}^{N} ([z^n]-[z^{n-1}]) f(z) + [z^0] f(z) = \sum_{n=0}^{N} [z^n] (1-z) f(z). \]
Since $1+w_1 = \frac{1}{1-z}$ and $w_1 = \frac{z}{1-z}$, this becomes
\[ [z^N]f(z) = \sum_{n=0}^{N} [z^n] \frac{1}{1+w_1} f(z) = \sum_{n=0}^{N-1} [z^n] \frac{1}{w_1} (f(z)-f(0)). \]

Let 
\[ f_M \coloneqq \frac{1}{w_1} \exp{\left(\sum_{k=1}^{M} \frac{w_k}{k}\right)} - \frac{1}{w_1} \exp{\left(\sum_{k=2}^{M} \frac{w_k}{k}\right)} = \left( \sum_{m=1}^{\infty} \frac{w^{m-1}_1}{m!} \right) \prod_{k=2}^{M} \left( \sum_{m=0}^{\infty} \frac{1}{m!}\left( \frac{w_k}{k} \right)^m \right). \]
Note that $f_M \in \mathbb{R}\{w_1,\ldots,w_M\}_{\infty}$ and all of its coefficients of $w_k$'s are positive. For each $M \in \{N,N-1\}$, we have
\[ I\left[ \prod_{k=1}^{M} w_k^{r_k} \right](M) \leq I'\left[ \prod_{k=1}^{M} w_k^{r_k} \right](M) \leq I\left[ \prod_{k=1}^{M} (1+w_k)^{r_k} \right](M) \]
by \eqref{P_ineq}. Thus, we have
\begin{equation} \label{pI1} I[f_{N-1}(w_1,\ldots,w_{N-1}) ](N-1) \leq I'[f_{N-1}(w_1,\ldots,w_{N-1}) ](N-1) \end{equation}
\begin{equation} \label{pI2} I'[f_{N}(w_1,\ldots,w_{N}) ](N) \leq I[f_{N}(1+w_1,\ldots,1+w_{N}) ](N) \end{equation}
by Lemma \ref{L_I}.

For each $M \in \{N,N-1\}$, we have
\begin{align*}
I'[f_{M}(w_1,\ldots,w_{M}) ](M) &= I' \left[ \frac{1}{w_1} \exp{\left(\sum_{k=1}^{M} \frac{w_k}{k}\right)} - \frac{1}{w_1} \exp{\left(\sum_{k=2}^{M} \frac{w_k}{k}\right)} \right](M), \\
&= J\left[ \frac{1}{x} \exp{\left(\sum_{k=1}^{M} \frac{xM}{k^2}\right)} - \frac{1}{x} \exp{\left(\sum_{k=2}^{M} \frac{xM}{k^2}\right)} \right], \\
&= J\left[ \sum_{n=1}^{\infty} \frac{(M\zeta_M(2))^n}{n!} x^{n-1} -  \sum_{n=1}^{\infty} \frac{(M(\zeta_M(2)-1))^n}{n!} x^{n-1} \right], \\
&= \sum_{n=1}^{\infty} \frac{(M\zeta_M(2))^n}{n!(n-1)!} -  \sum_{n=1}^{\infty} \frac{(M(\zeta_M(2)-1))^n}{n!(n-1)!}, \\
&= \zeta_{M}{(2)}^{\frac{1}{2}} M^{-\frac{1}{2}} I_1(2\sqrt{\zeta_{M}{(2)} M}) - (\zeta_{M}{(2)}-1)^{\frac{1}{2}} M^{-\frac{1}{2}} I_1(2\sqrt{(\zeta_{M}{(2)} -1)M}).
\end{align*}
Note that if $M=N-1$, this is bounded by $\zeta_{N}{(2)}^{\frac{1}{2}} N^{-\frac{1}{2}} I_1(2\sqrt{\zeta_{N}{(2)} N})$. The left-hand side of \eqref{pI2} is
\begin{align*}
I[f_{N}(1+w_1,\ldots,1+w_{N}) ](N) &= \sum_{n=0}^{N} [z^n] \left( \frac{e^{H_N}}{1+w_1} \exp{\left(\sum_{k=1}^{N} \frac{w_k}{k}\right)} - \frac{e^{H_{N}-1}}{1+w_1} \exp{\left(\sum_{k=2}^{N} \frac{w_k}{k}\right)} \right), \\
&= [z^N] \left(e^{H_N} \exp{\left(\sum_{k=1}^{N} \frac{w_k}{k}\right)} - e^{H_N-1} \exp{\left(\sum_{k=2}^{N} \frac{w_k}{k}\right)} \right), \\
&\leq e^{H_N} p(N). \\
\end{align*}
This gives the lower bounds for $p(N)$. 

To see the upper bounds, we observe that
\[ I[f_{N-1}(w_1,\ldots,w_{N-1}) ](N-1) = \sum_{n=0}^{N-1} [z^n] f_{N}(w_1,\ldots,w_{N}) = p(N) - p'(N) \]
where 
\[ p'(N) = \sum_{n=0}^{N-1} [z^n] \frac{1}{w_1} \left(\exp{\left(\sum_{k=2}^{N} \frac{w_k}{k}\right)}-1\right)=[z^N] \exp{\left(\sum_{k=2}^{N} \frac{w_k}{k}\right)}. \] 
Applying Lemma \ref{L_I} again, we obtain
\begin{align*}
p'(N) &\leq \sum_{n=0}^{N} [z^n] \exp{\left(\sum_{k=2}^{N} \frac{w_k}{k}\right)}, \\
&= I\left[\exp{\left(\sum_{k=2}^{N} \frac{w_k}{k}\right)}\right](N), \\
&\leq I'\left[\exp{\left(\sum_{k=2}^{N} \frac{w_k}{k}\right)}\right](N) = I_0(2\sqrt{(\zeta_{N}{(2)}-1) N}). 
\end{align*}
This and \eqref{pI1} give the upper bounds for $p(N)$. 
\end{proof}

\section{Preliminaries}

It should be possible to obtain all the specific bounds in this paper by purely elementary means, as in the proof of Theorem \ref{P1}. 
However, to state our results in greater generality, we employ several analytic lemmas.

Throughout the paper, we fix $c>0$. 

Let $\delta{(x)}$ be the function 
\begin{eqnarray*}
\delta{(x)} = 
\begin{cases}
1 & \text{$0 < x$,} \\
\frac{1}{2} & \text{$x=0$,} \\
0 & \text{$x < 0$.}
\end{cases}
\end{eqnarray*}

\begin{lemma}\label{perron}
We have
\begin{equation} \label{perron 1} \frac{1}{2\pi i} \lim_{T \to \infty} \int_{c-iT}^{c+iT} \frac{1}{s} e^{sx} ds = \delta{(x)}, \end{equation}
and
\begin{equation} \label{perron 2} \frac{1}{2\pi i} \int_{c-iT}^{c+iT} \frac{1}{s} e^{sx} ds = O(e^{cx}), \quad (T \geq 2c, x \in \mathbb{R}). \end{equation}
\end{lemma}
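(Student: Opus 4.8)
The plan is to prove the two statements separately, starting from the elementary computation of the integral along the vertical segment. For \eqref{perron 2}, I would parametrize $s = c + it$ with $t \in [-T,T]$, giving
$$ \frac{1}{2\pi i} \int_{c-iT}^{c+iT} \frac{1}{s} e^{sx}\,ds = \frac{e^{cx}}{2\pi} \int_{-T}^{T} \frac{e^{itx}}{c+it}\,dt. $$
To bound the remaining integral uniformly in $T$, I would split it at $|t| = c$: on $|t| \le c$ the integrand is $O(1/c)$ and the contribution is $O(1)$; on $|t| > c$ I would write $1/(c+it) = 1/(it) \cdot 1/(1 - ic/t)$ and note $|1/(c+it)| \le \sqrt2/|t|$, then integrate by parts in the oscillatory factor $e^{itx}$ (or, when $x = 0$, observe the integrand is odd in the imaginary part so that $\int_{-T}^{T} \frac{dt}{c+it}$ stays bounded). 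Either way one gets a bound independent of $T$ once $T \ge 2c$, which yields \eqref{perron 2}. Actually, the cleanest route is: for $x \ne 0$, integrate by parts once to get a boundary term $O(1/(|x|\,T))$ plus an absolutely convergent integral $O(1/|x|)$ bounded by $\int_{c}^{\infty} t^{-2}\,dt$; combined with the $|t|\le \max(c,1/|x|)$ piece this is $O(e^{cx})$ with constant independent of $T$, and the $x=0$ case is handled by the oddness remark above.

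For \eqref{perron 1}, I would identify $1/s$ as the Laplace transform of the constant function $\mathbf{1}(t) \equiv 1$, which lies in $\mathcal{F}$ since $1 = O(e^{\varepsilon t})$ and it is certainly of bounded variation on bounded intervals. Then Lemma \ref{laplace}\eqref{laplace inv} gives directly
$$ \mathcal{L}^{-1}\!\left[\tfrac{1}{s}\right](x) = \frac{1}{2\pi i}\lim_{T\to\infty}\int_{c-iT}^{c+iT}\frac{1}{s}e^{sx}\,ds = \begin{cases} \frac12(1+1) = 1 & x>0,\\ \frac12\cdot 1 = \frac12 & x=0,\\ 0 & x<0,\end{cases} $$
which is exactly $\delta(x)$. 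The independence of $c>0$ is part of the definition of $\mathcal{L}^{-1}$ recorded in the excerpt, so there is nothing further to check there.

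The main obstacle is the uniformity in $T$ in \eqref{perron 2}: the bound must hold for \emph{every} $T \ge 2c$, not merely in the limit, so I cannot simply invoke the convergence of the principal-value integral. This forces the explicit integration-by-parts estimate with carefully tracked boundary terms, and the separate treatment of the non-oscillatory case $x=0$. Everything else is either a direct appeal to Lemma \ref{laplace} or a routine estimate; I would present \eqref{perron 2} first since its constant is what later sections will need to carry through the interchange-of-limits arguments.
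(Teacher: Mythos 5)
Your treatment of \eqref{perron 1} is exactly the paper's: identify $\tfrac1s=\mathcal{L}[1](s)$ and apply Lemma \ref{laplace}\eqref{laplace inv}; nothing to add there. For \eqref{perron 2} you take a different route (parametrize $s=c+it$, split the range, integrate by parts) from the paper, and as sketched it has a genuine gap: the constant you obtain is not uniform in $x$, whereas the lemma --- read with the paper's $O$-convention, and more importantly as it is used later --- needs a single constant valid for all $x\in\mathbb{R}$, $c>0$, $T\ge 2c$. Concretely, on the central range $|t|\le\max(c,1/|x|)$ you estimate the integrand in absolute value; since $|c+it|^{-1}\asymp 1/|t|$ for $|t|\ge c$, that piece contributes roughly $2\log\frac{1}{c|x|}$ when $0<|x|<1/c$ (and in your first variant, the boundary and derivative terms of the integration by parts at $|t|=c$ are of size $1/(c|x|)$), so what you actually prove is a bound $O\bigl(e^{cx}(1+\log^{+}\tfrac{1}{c|x|})\bigr)$ ``with constant independent of $T$'', as you yourself phrase it --- which blows up as $x\to0$. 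This is not a pedantic point: in the proofs of Theorems A and B, \eqref{perron 2} supplies the dominating function $O(e^{cx})$ with $x=N-h(x_1)-\cdots-Nh(x_\rho)$ ranging over $(-\infty,N]$, including values arbitrarily close to $0$, so the implied constant must be independent of $x$.

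The gap is fixable within your framework by exploiting cancellation on the central range too: write $\frac{1}{c+it}=\frac{c}{c^2+t^2}-\frac{it}{c^2+t^2}$; the first term is absolutely integrable with $\int_{-\infty}^{\infty}\frac{c\,dt}{c^2+t^2}=\pi$, and in the second the cosine part vanishes by oddness, leaving $2\int_0^T\sin(tx)\,\frac{t}{c^2+t^2}\,dt$, which after $u=t|x|$ becomes $\int_0^{T|x|}\sin u\,\frac{u}{(c|x|)^2+u^2}\,du$ and is bounded by an absolute constant (one integration by parts or the second mean value theorem, since $u/(a^2+u^2)$ is unimodal and $\le\min(1/u,\,1/(2a))$). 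The paper avoids all of this by a contour argument: for $x\ge0$ close the segment with the left half of the circle $|s-c|=T$ (picking up the residue $1$ at $s=0$), for $x<0$ with the right half, and bound the arc trivially by $\frac{e^{cx}}{T-c}\cdot\pi T\le 2\pi e^{cx}$ for $T\ge 2c$; this yields the absolute constant in one stroke, and your separate $x=0$ oddness remark becomes unnecessary.
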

See the first lemma in Section 17 of \cite{davenport} for the proof. This is essentially the same as Perron's formula. 
This can be shown by splitting into the cases $x<0$ and $x>0$, closing the contour to the right or to the left accordingly, and applying Cauchy's theorem.

Let us define Wright's generalized Bessel function \cite{wrightBessel} for $z \in \mathbb{C}$ by
\[ {\psi}_u (z) = \sum_{n=0}^{\infty} \frac{z^n}{n!\Gamma{(nu+1)}}. \]

\begin{lemma}\label{psi}
\begin{equation} \label{psi lap} \psi_{u}(a t^{u}) = \frac{1}{2\pi i} \lim_{T \to \infty} \int_{c-iT}^{c+iT} \frac{e^{a s^{-u}}}{s} e^{ts} ds \end{equation}
holds for all $a,t,u,c>0$. 
\end{lemma}
\begin{proof}

Expanding the exponential, we have
\[ \frac{e^{a s^{-u}}}{s} = \sum_{n=0}^{\infty} \frac{1}{n!} a^n s^{-(1+un)}. \]
For each $n \in \mathbb{Z}_{\geq 1}$, we have
\begin{equation}
\int_{c-i\infty}^{c+i\infty} |s|^{-(1+un)} |ds| = c^{-un} \int_{-\infty}^{\infty} \frac{1}{(1+x^2)^{(1+un)/2}} dx \leq c^{-un} C_{u} \label{s_int}
\end{equation}
where $C_{u}=\int_{-\infty}^{\infty} (1+x^2)^{-(1+u)/2} dx$. 
By the dominated convergence theorem and Lemma \ref{perron}, we have
\begin{align*}
\frac{1}{2\pi i} \lim_{T \to \infty} \int_{c-iT}^{c+iT} \frac{e^{a s^{-u}}}{s} e^{ts} ds &= \frac{1}{2\pi i} \lim_{T \to \infty} \int_{c-iT}^{c+iT} \frac{1}{s} e^{ts} ds + \frac{1}{2\pi i} \lim_{T \to \infty} \int_{c-iT}^{c+iT} \sum_{n=1}^{\infty} \frac{1}{n!} a^n s^{-(1+un)} e^{ts} ds, \\
&= 1 + \sum_{n=1}^{\infty} \frac{1}{n!} a^n \frac{1}{2\pi i} \lim_{T \to \infty} \int_{c-iT}^{c+iT} s^{-(1+un)} e^{ts} ds. 
\end{align*}
Using the well-known integral representation of $1/\Gamma$ (or the Laplace inversion theorem) we obtain
\[ \frac{1}{2\pi i} \lim_{T \to \infty} \int_{c-iT}^{c+iT} s^{-(1+un)} e^{ts} ds = \frac{t^{un}}{\Gamma(1+un)}. \]
This completes the proof. 
\end{proof}

\begin{lemma}\label{L_I'}
Let $M \in \mathbb{Z}_{\geq 1}, X \in \mathbb{R}_{>0}$. Let $w_k(s), k=1,\ldots,M$ be functions $\{s \in \mathbb{C} : \Re{s}=c\} \to \mathbb{C}$ such that there exist $\alpha,D>0$ such that
\[ w_k(\overline{s}) = \overline{w_k(s)}, \quad |w_k(s)| \leq D|s|^{-\alpha} \]
holds for all $s\in \mathbb{C}$ with $\Re{s}=c$ and all $k=1,\ldots,M$. Let $I'(\cdot)(N):\mathbb{R}\{w_1,\ldots,w_M\}_{\infty} \to \mathbb{R}$ be given by
\[ I'\left[\sum_{\bm{r} \in \mathbb{Z}^M_{\geq 0}} c_{\bm{r}} \prod_{k=1}^{M} w^{r_k}_k \right](X) = \frac{1}{2\pi i} \lim_{T \to \infty} \int_{c-iT}^{c+iT} \sum_{\bm{r} \in \mathbb{Z}^M_{\geq 0}} c_{\bm{r}} \prod_{k=1}^{M} w^{r_k}_k(s) e^{sX} \frac{ds}{s}. \]
Then $I' \in \mathfrak{I}_M$. 
\end{lemma}

\begin{proof}
If $\bm{r}=0$, we have
\[ \frac{1}{2\pi i} \lim_{T \to \infty} \int_{c-iT}^{c+iT} \prod_{k=1}^{M} w^{r_k}_k(s) e^{sX} \frac{ds}{s} = 1, \]
by Lemma \ref{perron}. Otherwise,
\[ \int_{c-i\infty}^{c+i\infty} \left| \prod_{k=1}^{M} w^{r_k}_k(s) \right| \frac{|ds|}{|s|} \leq D^{r} \int_{c-i\infty}^{c+i\infty} |s|^{-(1+ \alpha r)} |ds| \leq C_{\alpha} \left( \frac{D}{c^{\alpha}} \right)^{r} \]
by \eqref{s_int}, where $r=r_1+\cdots+r_M$, and thus
\[ \sum_{\bm{r} \in \mathbb{Z}^M_{\geq 0} \setminus \{0\}} |c_{\bm{r}}| \int_{c-i\infty}^{c+i\infty} \left| \prod_{k=1}^{M} w^{r_k}_k(s) \right| \frac{|ds|}{|s|} < \infty \]
for all $\sum_{\bm{r} \in \mathbb{Z}^M_{\geq 0}} c_{\bm{r}} \prod_{k=1}^{M} w^{r_k}_k \in \mathbb{R}\{w_1,\ldots,w_M\}_{\infty}$. 
This gives the well-definedness of $I'$ and $I' \in \mathfrak{I}_M$. 
\end{proof}

\section{Generalization 1}\label{Generalization 1}
Let $h$ be a function $\mathbb{R}_{\geq 0} \to \mathbb{R}_{\geq 0}$ satisfying \par
\begin{enumerate}[label=(\roman*)]
\item\label{h_mon} $h(0) = 0$ and $h(x)$ is strictly increasing for $x \geq 0$, \par
\item\label{h_val} $h(n) \in \mathbb{Z}$ for all $n \in \mathbb{Z}_{\geq 0}$. 
\end{enumerate}
Then the generalized partition function $p_h$ is defined by the generating function
\begin{equation} \sum_{n=0}^{\infty} p_{h}(n) z^n = \prod_{n=1}^{\infty} {(1-z^{h(n)} )}^{-1}. \end{equation}
The function $p_h(n)$ is equal to the number of ways to write the integer $n$ as a sum of elements from the set $\{ h(i) : i \in \mathbb{Z}_{\geq 1} \}$ in non-decreasing order. 

Note that \ref{h_mon} and \ref{h_val} imply
\begin{enumerate}[label=(\roman*), resume]
\item\label{h_gr} $\int_{0}^{\infty} e^{-\varepsilon h(x)} dx$ converges for all $\varepsilon>0$. 
\end{enumerate}
and
\[ \sum_{n=0}^{N} p_h (n) = \#\{ (m_i)_{i=1}^{\infty} \in \mathbb{Z}^{\mathbb{N}}_{\geq 0} : \sum_{i=1}^{\infty} m_i h(i) \leq N \} \]
for all $N \in \mathbb{Z}_{\geq 1}$.

\begin{theorem}\label{G1}
Let $h$ be a function $\mathbb{R}_{\geq 0} \to \mathbb{R}_{\geq 0}$ satisfying \ref{h_mon} and \ref{h_gr}. 
Let $X \geq h(1)$ and $M=\lfloor X/h(1) \rfloor$. 
Let $w_k(s)$ and $\lambda(s)$ be defined by
\begin{equation} w_k(s) = \int_{0}^{\infty} e^{-s k h(x)}dx, \qquad \lambda(s) = \sum_{k=1}^{M} \frac{w_k(s)}{k}, \end{equation}
for $s \in \mathbb{C}$ with $\Re{s}=c$. 
Suppose that there exist $\alpha,D>0$ such that $|w_k(s)| \leq D|s|^{-\alpha}$ holds for all $s \in \mathbb{C}$ with $\Re{s}=c$ and all $k=1,\ldots,M$. Let
\begin{equation} \varphi_{h} (x) \coloneqq \frac{1}{2\pi i} \lim_{T \to \infty} \int_{c-iT}^{c+iT} \frac{1}{s} \exp{(\lambda(s))} e^{xs} ds, \quad (x>0) \end{equation}
then one has
\[ e^{-H_M} \varphi_{h} (X) \leq \#\{ (m_i) \in \mathbb{Z}^{\mathbb{N}}_{\geq 0} : \sum_{i=1}^{\infty} m_i h(i) \leq X \}  \leq \varphi_{h} (X). \]
\end{theorem}

\begin{proof}

Fix $X\geq 1$. We begin the proof by showing that there exists $L_0>0$ such that for any $L\geq L_0$ and $(x_i), (a_i) \in \mathbb{Z}_{\geq 0}^{\mathbb{N}}$, the inequality 
\begin{equation} \label{G1_h} \sum_{i=0}^{\infty} a_i h(x_i) \leq X, \end{equation}
is equivalent to 
\begin{equation} \label{G1_hL} \sum_{i=0}^{\infty} a_i \lfloor Lh(x_i) \rfloor \leq \lceil LX \rceil. \end{equation}
\eqref{G1_h} implies \eqref{G1_hL} for any given $L>0$. Thus it suffices to show that \eqref{G1_hL} implies \eqref{G1_h} for some $L>0$. Since the set $\{ \sum_{i=0}^{\infty} a_i h(x_i) : (x_i), (a_i) \in \mathbb{Z}_{\geq 0}^{\mathbb{N}} \}$ has no limit points, there exists $\varepsilon>0$ such that $\sum_{i=0}^{\infty} a_i h(x_i) < X+ \varepsilon$ implies $\sum_{i=0}^{\infty} a_i h(x_i) \leq X$ for all $(x_i), (a_i) \in \mathbb{Z}_{\geq 0}^{\mathbb{N}}$. 
We choose $L>0$ sufficiently large so that $L^{-1} \lceil L X \rceil/\lfloor L h(1) \rfloor < \varepsilon/2, \lceil LX \rceil /L < X+\varepsilon/2$ and assume \eqref{G1_hL}. By \ref{h_mon}, we have
\begin{align*}
\sum_{i=0}^{\infty} a_i \frac{\lfloor Lh(x_i) \rfloor}{L} &\geq \sum_{i=0}^{\infty} a_i h(x_i) - \frac{1}{L} \sum_{\substack{i \in \mathbb{Z}_{\geq 0} \\ h(x_i) \neq 0}} a_i \\
&\geq \sum_{i=0}^{\infty} a_i h(x_i) - \frac{1}{L} \sum_{i=0}^{\infty} a_i \frac{\lfloor Lh(x_i) \rfloor}{\lfloor Lh(1) \rfloor} \geq \sum_{i=0}^{\infty} a_i h(x_i) - \varepsilon/2
\end{align*}
and thus $\sum_{i=0}^{\infty} a_i h(x_i) < X+ \varepsilon$. This implies \eqref{G1_h} by the choice of $\varepsilon$.

We take $L\geq L_0$ sufficiently large so that $\lceil LX \rceil / \lfloor L h(1) \rfloor < \lfloor X/h(1) \rfloor + 1$ holds. 
Let $w_k(z) = \sum_{n=1}^{\infty} z^{k\lfloor Lh(n) \rfloor}$. Let $I,I':\mathbb{R}\{w_1,\ldots,w_M\} \to \mathbb{R}$ be
\[ I\left[\sum_{\bm{r} \in \mathbb{Z}^M_{\geq 0}} c_{\bm{r}} \prod_{k=1}^{M} w^{r_k}_k \right](X) = \sum_{n=0}^{\lceil LX \rceil} [z^n] \sum_{\bm{r} \in \mathbb{Z}^M_{\geq 0}} c_{\bm{r}} \prod_{k=1}^{M} w^{r_k}_k(z), \]
\[ I'\left[\sum_{\bm{r} \in \mathbb{Z}^M_{\geq 0}} c_{\bm{r}} \prod_{k=1}^{M} w^{r_k}_k \right](X) = \frac{1}{2\pi i} \lim_{T \to \infty} \int_{c-iT}^{c+iT} \sum_{\bm{r} \in \mathbb{Z}^M_{\geq 0}} c_{\bm{r}} \prod_{k=1}^{M} w^{r_k}_k(s) e^{sX} \frac{ds}{s}. \]

We have $I \in \mathfrak{I}_M$ by Lemma \ref{L_Id}. By the choice of $L$, we see that $[z^n]w_k(z)=0$ for all $n \leq \lceil LX \rceil$ if $k>M$, and further
\begin{align*}
I\left[ \exp{\left( \sum_{k=1}^{M} \frac{w_k}{k} \right)} \right](X) &= \sum_{n=0}^{\lceil LX \rceil} [z^n] \prod_{i=1}^{\infty} (1 - z^{\lfloor L h(i) \rfloor})^{-1}, \\
&= \sum_{n=0}^{\lceil LX \rceil} [z^n] \prod_{i=1}^{\infty} \sum_{m_i=0}^{\infty} z^{m_i \lfloor L h(i) \rfloor}, \\
&= \#\{ (m_i)_{i} \in \mathbb{Z}^{\mathbb{N}}_{\geq 0} : \sum_{i=1}^{\infty} m_i \lfloor Lh(i) \rfloor \leq \lceil LX \rceil \}, \\
&= \#\{ (m_i)_{i} \in \mathbb{Z}^{\mathbb{N}}_{\geq 0} : \sum_{i=1}^{\infty} m_i h(i) \leq X \}.
\end{align*}

We have $I' \in \mathfrak{I}_M$ by Lemma \ref{L_I'} and 
\[ \varphi_{h} (X) = I'\left[ \exp{\left( \sum_{k=1}^{M} \frac{w_k}{k} \right)} \right](X) \]
from definition. 

Therefore, the desired inequality can be written as follows.
\begin{equation} \label{G1re} I\left[\exp{\left(\sum_{k=1}^{M} \frac{w_k}{k}\right)}\right](X) \leq I'\left[\exp{\left(\sum_{k=1}^{M} \frac{w_k}{k}\right)}\right](X) \leq I\left[\exp{\left(\sum_{k=1}^{M} \frac{1+w_k}{k}\right)}\right](X) \end{equation}

We fix $r_1,\ldots ,r_{M} \in \mathbb{Z}_{\geq 0}$, and show that
\begin{equation} \label{G1_i} I\left[\prod_{k=1}^{M} w^{r_k}_k\right](X) \leq I'\left[\prod_{k=1}^{M} w^{r_k}_k\right](X) \leq I\left[\prod_{k=1}^{M} (1+w_k)^{r_k}\right](X). \end{equation}
This immediately follows from Lemma \ref{perron} if $r_1=\cdots=r_M=0$, thus we may suppose $r \geq 1$.

Let
\[ C = \{ (x^{(k)}_{i})_{\substack{1 \leq k \leq M \\ 1 \leq i \leq r_k}} \in \mathbb{R}^r_{\geq 0} : \sum_{1 \leq k \leq M} \sum_{1\leq i \leq r_k} k h(x^{(k)}_{i}) \leq X \}, \]
\[ C_+ = \{ (x^{(k)}_{i})_{\substack{1 \leq k \leq M \\ 1 \leq i \leq r_k}} \in \mathbb{R}^r_{\geq 0} : \sum_{1 \leq k \leq M} \sum_{1\leq i \leq r_k} k h(\lfloor x^{(k)}_{i} \rfloor) \leq X \}, \]
\[ C_- = \{ (x^{(k)}_{i})_{\substack{1 \leq k \leq M \\ 1 \leq i \leq r_k}} \in \mathbb{R}^r_{\geq 0} : \sum_{1 \leq k \leq M} \sum_{1\leq i \leq r_k} k h(\lceil x^{(k)}_{i} \rceil) \leq X \}. \]
By the monotonicity of $h$, we have $C_{-} \subseteq C \subseteq C_{+}$ and therefore $ \mathrm{vol}(C_{-}) \leq \mathrm{vol}(C) \leq \mathrm{vol}(C_{+})$.

By the choice of $L$, we have
\begin{align*}
I\left[\prod_{k=1}^{M} w_k^{r_k} \right](X) &= \sum_{n=0}^{\lceil LX \rceil} [z^n] \prod_{k=1}^{M} \prod_{i=1}^{r_k} \sum_{x^{(k)}_i=1}^{\infty} z^{k \lfloor L h(x_i) \rfloor} \\
&= \# \{ (x^{(k)}_{i})_{\substack{1 \leq k \leq M \\ 1 \leq i \leq r_k}} \in \mathbb{Z}^r_{\geq 1} : \sum_{1 \leq k \leq M} \sum_{1\leq i \leq r_k} k \lfloor Lh(x^{(k)}_{i}) \rfloor \leq \lceil LX \rceil \}, \\
&= \# \{ (x^{(k)}_{i})_{\substack{1 \leq k \leq M \\ 1 \leq i \leq r_k}} \in \mathbb{Z}^r_{\geq 1} : \sum_{1 \leq k \leq M} \sum_{1\leq i \leq r_k} k h(x^{(k)}_{i}) \leq X \}. 
\end{align*}
and similarly
\begin{align*}
I\left[\prod_{k=1}^{M} (1+w_k)^{r_k} \right](X) &= \sum_{n=0}^{\lceil LX \rceil} [z^n] \prod_{k=1}^{M} \prod_{i=1}^{r_k} \sum_{x^{(k)}_i=0}^{\infty} z^{k \lfloor L h(x_i) \rfloor} \\
&= \# \{ (x^{(k)}_{i})_{\substack{1 \leq k \leq M \\ 1 \leq i \leq r_k}} \in \mathbb{Z}^r_{\geq 0} : \sum_{1 \leq k \leq M} \sum_{1\leq i \leq r_k} k \lfloor Lh(x^{(k)}_{i}) \rfloor \leq \lceil LX \rceil \}, \\
&= \# \{ (x^{(k)}_{i})_{\substack{1 \leq k \leq M \\ 1 \leq i \leq r_k}} \in \mathbb{Z}^r_{\geq 0} : \sum_{1 \leq k \leq M} \sum_{1\leq i \leq r_k} k h(x^{(k)}_{i}) \leq X \}. 
\end{align*}
Thus, we have
\[ I\left[\prod_{k=1}^{M} w_k^{r_k} \right](X) = \mathrm{vol}(C_{-}), \quad I\left[\prod_{k=1}^{M} (1+w_k)^{r_k} \right](X) = \mathrm{vol}(C_{+}). \]

Recall that
\begin{eqnarray*}
\frac{1}{2\pi i} \lim_{T \to \infty} \int_{c-iT}^{c+iT} \frac{1}{s} e^{sx} ds = \delta(x) = 
\begin{cases}
1 & \text{$x > 0$,} \\
\frac{1}{2} & \text{$x=0$,} \\
0 & \text{$x < 0$.}
\end{cases}
\end{eqnarray*}
by Lemma \ref{perron}. Hence, we have
\begin{align*}
I'\left[\prod_{k=1}^{M} w^{r_k}_k\right](X) &= \frac{1}{2\pi i} \lim_{T \to \infty} \int_{c-iT}^{c+iT} \int_{ (x^{(k)}_i) \in \mathbb{R}^r_{\geq 0} } \frac{1}{s} \exp{\left(s\left(X-\sum_{1 \leq k \leq M} \sum_{1\leq i \leq r_k} k h(x^{(k)}_{i})\right)\right)} dx ds \\
&= \frac{1}{2\pi i} \lim_{T \to \infty} \int_{ (x^{(k)}_i) \in \mathbb{R}^r_{\geq 0}} \int_{c-iT}^{c+iT} \frac{1}{s} \exp{\left(s\left(X-\sum_{1 \leq k \leq M} \sum_{1\leq i \leq r_k} k h(x^{(k)}_{i})\right)\right)} ds dx \\
&= \int_{ (x^{(k)}_i) \in \mathbb{R}^r_{\geq 0}} \delta\left(X-\sum_{1 \leq k \leq M} \sum_{1\leq i \leq r_k} k h(x^{(k)}_{i})\right) dx = \mathrm{vol}(C).
\end{align*}
The last equality follows from \ref{h_mon}, and we can justify the interchange of integrals by Fubini's theorem, since
\[ \frac{e^{cX}}{c} \exp{\left(-c\sum_{1 \leq k \leq M} \sum_{1\leq i \leq r_k} k h(x^{(k)}_{i}) \right)} \geq \left| \frac{1}{s} \exp{\left(s\left(X-\sum_{1 \leq k \leq M} \sum_{1\leq i \leq r_k} k h(x^{(k)}_{i})\right)\right)} \right| \]
is integrable over $(x^{(k)}_i) \in \mathbb{R}^r_{\geq 0}, s =c+it, |t| \leq T$ by \ref{h_gr}. 
Also we can justify the interchange of limit and integral by the dominated convergence theorem, since we have
\[ \int_{c-iT}^{c+iT} \frac{1}{s} \exp{\left(s\left(X-\sum_{1 \leq k \leq M} \sum_{1\leq i \leq r_k} k h(x^{(k)}_{i})\right)\right)} ds = O\left(\exp{\left(-c\sum_{1 \leq k \leq M} \sum_{1\leq i \leq r_k} kh(x^{(k)}_{i})\right)}\right) \]
uniformly for all $T \geq 2c$ and $(x^{(k)}_i) \in \mathbb{R}^r_{\geq 0}$, by Lemma \ref{perron}. 

Hence, we have \eqref{G1_i} and therefore \eqref{G1re} by Lemma \ref{L_I}. 
\end{proof}

It is clear from the proof that by including terms with $k > M$, one can also obtain an upper bound where $\lambda$ is replaced with 
$$ \lambda(s) = \sum_{k=1}^{\infty} \frac{w_k(s)}{k} = \sum_{k=1}^{\infty}  \int_{0}^{\infty} \frac{e^{-skh(u)}}{k} du = - \int_{0}^{\infty} \ln{(1-e^{-sh(u)})}du, $$
with some extra condition on $h$. 

It is interesting to note that, although the definition of $p_h$ depends only on the values of $h$ on $\mathbb{Z}_{\geq 1}$, the resulting bounds depend on its behavior on $\mathbb{R}_{\geq 0}$. 
It would be interesting to try to get a better bound on $\sum_{n \leq N} p(n)$, by taking $h$ closer to $\lfloor x \rfloor$ while satisfying \ref{h_mon},\ref{h_val} and $h(n)=n$ for $n \in \mathbb{Z}_{\geq 1}$ (e.g., $h(x)=x-\frac{1}{\pi} (\sin{\pi x})^2$). 
Unfortunately, we do not know such an $h$ for which $\varphi_{h}$ can be computed explicitly.

\begin{corollary}[Partition into $q$-th power]\label{q-th partition}
Let $q>0$ and $X \geq 1$. Then
\[ e^{-H_{\lfloor X\rfloor}} {\psi}_{1/q} (\Gamma{(1+1/q)} \zeta_{\lfloor X \rfloor}{(1+1/q)}X^{\frac{1}{q}}) \leq \#\{ (m_i) \in \mathbb{Z}^{\mathbb{N}}_{\geq 0} : \sum_{i=1}^{\infty} m_i i^{q} \leq X \} \leq {\psi}_{1/q} (\Gamma{(1+1/q)} \zeta_{\lfloor X \rfloor}{(1+1/q)}X^{\frac{1}{q}}). \]
In particular, if $q \in \mathbb{Z}$, we have
\[ e^{-H_N} {\psi}_{1/q} (\Gamma{(1+1/q)} \zeta_{N}{(1+1/q)}N^{\frac{1}{q}}) \leq \sum_{n=0}^{N} p_q (n) \leq {\psi}_{1/q} (\Gamma{(1+1/q)} \zeta_{N}{(1+1/q)}N^{\frac{1}{q}}). \]
for all $N \in \mathbb{Z}_{\geq 1}$. Here, $p_q(n)$ denotes the number of partitions of $n$ into $q$-th power natural numbers. 
\end{corollary}
\begin{proof}
We apply Theorem \ref{G1} with $h(x)= x^q$. We now see that
\[ w_k(s) = \int_{0}^{\infty} e^{-sk x^{q}} dx = \Gamma{(1+1/q)} k^{-\frac{1}{q}} s^{-\frac{1}{q}},\quad \lambda{(s)} = \Gamma{(1+1/q)} \zeta_{\lfloor X \rfloor}{(1+1/q)}s^{-\frac{1}{q}}. \] 
Therefore, by Lemma \ref{psi} $(u,a,t)=(1/q,\Gamma{(1+1/q)} \zeta_{N}{(1+1/q)},X)$, we have 
\[ \varphi_h(X) = \psi_{1/q} (\Gamma{(1+1/q)} \zeta_{\lfloor X \rfloor}{(1+1/q)}X^{\frac{1}{q}}). \]
\end{proof}
Note that by Wright \cite{wrightIII}(stated without proof in \cite{hardy-ramanujan}), the asymptotic behavior of $p_q$ is given by
\[ p_q(n) = n^{-\frac{3}{2}} (2\pi)^{-\frac{q}{2}} \psi^{(-3/2)}_{1/q}{(\Gamma{(1 + 1/q)} \zeta{(1 + 1/q)} n^{1/q})}(1 + o(1)), \]
where 
\[ \psi^{(-3/2)}_{u}{(z)} = \sum_{n=0}^{\infty} \frac{z^n}{n! \Gamma{(nu - 1/2)}}. \]
This, together with the asymptotic formula for $\psi$ (\cite{wrightBessel} Theorem 2), gives
\[ \frac{{\psi}_{1/q} (\Gamma{(1+1/q)} \zeta_{N}{(1+1/q)}N^{\frac{1}{q}})}{\sum_{n=0}^{N} p_q (n)} \asymp N^{\frac{q}{2(1+q)}}, \quad \frac{\sum_{n=0}^{N} p_q (n)}{e^{-H_N} {\psi}_{1/q} (\Gamma{(1+1/q)} \zeta_{N}{(1+1/q)}N^{\frac{1}{q}})} \asymp N^{\frac{2+q}{2(1+q)}} \]
as $N \to \infty$.

\section{Generalization 2}\label{Generalization 2}
Let $g$ be a function $\mathbb{R}_{\geq 0} \to \mathbb{R}$. Then the generalized partition function $p^g$ is defined by
$$ \sum_{n=0}^{\infty} p^{g}(n) z^n = \prod_{n=1}^{\infty} {(1-z^{n} )}^{-g(n)}. $$ 

Let $\mathcal{F}$ denote the set of measurable functions $f:\mathbb{R}_{\geq 0} \to \mathbb{R}_{\geq 0}$ such that $f(x) e^{-\varepsilon x}$ is bounded for every $\varepsilon>0$.  
\begin{theorem}\label{G2}
Let $\epsilon \in \{+,-\}$ and $N \in \mathbb{Z}_{\geq 1}$. Let $g_{\epsilon} \in \mathcal{F}$ be a function such that, for all $x\geq 0$, $g(\lceil x \rceil) \leq g_+(x)$ if $\epsilon=+$ and $g_-(x) \leq g(\lfloor x \rfloor)$ if $\epsilon=-$.

Let $w_k(s), \lambda(s)$ be defined by
\begin{equation} w_k(s) = \int_{0}^{\infty} g_{\epsilon}(x) e^{-s k x}dx, \qquad \lambda(s) = \sum_{k=1}^{N} \frac{w_k(s)}{k}, \end{equation}
for $s \in \mathbb{C}$ with $\Re{s}=c$. 
Suppose that there exist $\alpha,D>0$ such that $|w_k(s)| \leq D|s|^{-\alpha}$ holds for all $s \in \mathbb{C}$ with $\Re{s}=c$ and all $k=1,\ldots,N$. Let
\begin{equation} \varphi^{g_{\epsilon}} (x) \coloneqq \frac{1}{2\pi i} \lim_{T \to \infty} \int_{c-iT}^{c+iT} \frac{1}{s} \exp{(\lambda(s))} e^{xs} ds, \end{equation}
then one has
\[ \sum_{n=0}^{N} p^g (n) \leq \varphi^{g_+} (N) \]
if $\epsilon=+$, and 
\[ e^{-g(0) H_{N}} \varphi^{g_-} (N) \leq \sum_{n=0}^{N} p^g (n) \]
if $\epsilon=-$.  
\end{theorem}

\begin{proof}

Let $w_k(z) = \sum_{n=1}^{\infty} g(n) z^{kn}$. Let $I,I':\mathbb{R}\{w_1,\ldots,w_N\}_{\infty} \to \mathbb{R}$ be
\[ I\left[\sum_{\bm{r} \in \mathbb{Z}^N_{\geq 0}} c_{\bm{r}} \prod_{k=1}^{N} w^{r_k}_k \right](N) = \sum_{n=0}^{N} [z^n] \sum_{\bm{r} \in \mathbb{Z}^N_{\geq 0}} c_{\bm{r}} \prod_{k=1}^{N} w^{r_k}_k(z), \]
\[ I'\left[\sum_{\bm{r} \in \mathbb{Z}^N_{\geq 0}} c_{\bm{r}} \prod_{k=1}^{N} w^{r_k}_k \right](N) = \frac{1}{2\pi i} \lim_{T \to \infty} \int_{c-iT}^{c+iT} \sum_{\bm{r} \in \mathbb{Z}^N_{\geq 0}} c_{\bm{r}} \prod_{k=1}^{N} w^{r_k}_k(s) e^{sN} \frac{ds}{s}. \]
We have $I \in \mathfrak{I}_N$ by Lemma \ref{L_Id} and $I' \in \mathfrak{I}_N$ by Lemma \ref{L_I'}. 
We also have
\[ \sum_{n=0}^{N} p^{g} (n) = I\left[ \exp{\left( \sum_{k=1}^{N} \frac{w_k}{k} \right)} \right](N), \quad \varphi^{g_{\varepsilon}} (N) = I'\left[ \exp{\left( \sum_{k=1}^{N} \frac{w_k}{k} \right)} \right](N). \]

By Lemma \ref{L_I}, it suffices to show that
\begin{equation} \label{G2_i+} I\left[\prod_{k=1}^{N} w^{r_k}_k\right](N) \leq  I'\left[\prod_{k=1}^{N} w^{r_k}_k\right](N), \end{equation}
\begin{equation} \label{G2_i-} I'\left[\prod_{k=1}^{N} w^{r_k}_k\right](N) \leq I\left[\prod_{k=1}^{N} (g(0)+w_k)^{r_k}\right](N), \end{equation}
for all fixed $r_1,\ldots ,r_{N} \in \mathbb{Z}_{\geq 0}$, respectively for each case $\epsilon=+,-$.

We fix $r_1,\ldots ,r_{N} \in \mathbb{Z}_{\geq 0}$ and show \eqref{G2_i+}.
We may suppose $r \geq 1$, otherwise this follows from Lemma \ref{perron}. 
Let $K(x^{(k)}_{i})=\sum_{1 \leq k \leq N} \sum_{1\leq i \leq r_k} k x^{(k)}_{i}$.

Suppose $\epsilon=+$. Then we have
\begin{align*}
I\left[\prod_{k=1}^{N} w^{r_k}_k\right](N) &= \sum_{\substack{(z^{(k)}_i) \in \mathbb{Z}^r_{\geq 1} \\ K(z^{(k)}_i) \leq N}} \prod_{k=1}^{N} \prod_{i=1}^{r_k} g(z^{(k)}_i) \\
&=\sum_{\substack{(z^{(k)}_i) \in \mathbb{Z}^r_{\geq 1} \\ K(z^{(k)}_i) \leq N}} \int_{z^{(k)}_{i} -1 \leq x^{(k)}_{i} \leq z^{(k)}_{i}} \prod_{k=1}^{N} \prod_{i=1}^{r_k} g(\lceil x^{(k)}_i \rceil) dx \\
&\leq \sum_{\substack{(z^{(k)}_i) \in \mathbb{Z}^r_{\geq 1} \\ K(z^{(k)}_i) \leq N}} \int_{z^{(k)}_{i} -1 \leq x^{(k)}_{i} \leq z^{(k)}_{i}} \prod_{k=1}^{N} \prod_{i=1}^{r_k} g_+(x^{(k)}_i) dx \\
&= \int_{ \substack{(x^{(k)}_i) \in \mathbb{R}^{r}_{\geq 0} \\ K(\lceil x^{(k)}_i \rceil) \leq N}} \prod_{k=1}^{N} \prod_{i=1}^{r_k} g_+(x^{(k)}_i) dx \\
&\leq \int_{ \substack{(x^{(k)}_i) \in \mathbb{R}^{r}_{\geq 0} \\ K(x^{(k)}_i) \leq N}} \prod_{k=1}^{N} \prod_{i=1}^{r_k} g_+(x^{(k)}_i) dx. \\
\end{align*}

Suppose $\epsilon=-$. Then 
\begin{align*}
I\left[\prod_{k=1}^{N} (g(0)+w_k)^{r_k} \right](N) &= \sum_{\substack{(z^{(k)}_i) \in \mathbb{Z}^r_{\geq 0} \\ K(z^{(k)}_i) \leq N}} \prod_{k=1}^{N} \prod_{i=1}^{r_k} g(z^{(k)}_i) \\
&=\sum_{\substack{(z^{(k)}_i) \in \mathbb{Z}^r_{\geq 0} \\ K(z^{(k)}_i) \leq N}} \int_{z^{(k)}_{i} \leq x^{(k)}_{i} \leq z^{(k)}_{i}+1} \prod_{k=1}^{N} \prod_{i=1}^{r_k} g(\lfloor x^{(k)}_i \rfloor) dx \\
&\geq \sum_{\substack{(z^{(k)}_i) \in \mathbb{Z}^r_{\geq 0} \\ K(z^{(k)}_i) \leq N}} \int_{z^{(k)}_{i} \leq x^{(k)}_{i} \leq z^{(k)}_{i}+1} \prod_{k=1}^{N} \prod_{i=1}^{r_k} g_-(x^{(k)}_i) dx \\
&= \int_{ \substack{(x^{(k)}_i) \in \mathbb{R}^{r}_{\geq 0} \\ K(\lfloor x^{(k)}_i \rfloor) \leq N}} \prod_{k=1}^{N} \prod_{i=1}^{r_k} g_-(x^{(k)}_i) dx \\
&\geq \int_{ \substack{(x^{(k)}_i) \in \mathbb{R}^{r}_{\geq 0} \\ K(x^{(k)}_i) \leq N}} \prod_{k=1}^{N} \prod_{i=1}^{r_k} g_-(x^{(k)}_i) dx. \\
\end{align*}

Therefore, it remains to show that
\[ I'\left[ \prod_{k=1}^{N} w_k^{r_k} \right](N) = \int_{ \substack{(x^{(k)}_i) \in \mathbb{R}^{r}_{\geq 0} \\ K(x^{(k)}_i) \leq N}} \prod_{k=1}^{N} \prod_{i=1}^{r_k} g_{\epsilon}(x^{(k)}_i) dx. \]
By Lemma \ref{perron}, we have
\begin{align*}
I'\left[\prod_{k=1}^{N} w^{r_k}_k\right](N) &= \frac{1}{2\pi i} \lim_{T \to \infty} \int_{c-iT}^{c+iT} \int_{ (x^{(k)}_i) \in \mathbb{R}^r_{\geq 0} } \frac{1}{s} e^{s(N-K(x^{(k)}_i))} \prod_{k=1}^{N} \prod_{i=1}^{r_k} g_{\epsilon}(x^{(k)}_i) dx ds \\
&= \frac{1}{2\pi i} \lim_{T \to \infty} \int_{ (x^{(k)}_i) \in \mathbb{R}^r_{\geq 0}} \int_{c-iT}^{c+iT} \frac{1}{s} e^{s(N-K(x^{(k)}_i))} \prod_{k=1}^{N} \prod_{i=1}^{r_k} g_{\epsilon}(x^{(k)}_i) ds dx \\
&= \int_{ (x^{(k)}_i) \in \mathbb{R}^r_{\geq 0}} \delta(N-K(x^{(k)}_i)) \prod_{k=1}^{N} \prod_{i=1}^{r_k} g_{\epsilon}(x^{(k)}_i) dx = \int_{\substack{(x^{(k)}_i) \in \mathbb{R}^r_{\geq 0} \\ K(x^{(k)}_i) \leq N}} \prod_{k=1}^{N} \prod_{i=1}^{r_k} g_{\epsilon}(x^{(k)}_i) dx. 
\end{align*}
Here, we can justify the interchanging of the order of integration by Fubini's theorem, and limit and integral by the dominated convergence theorem as in the proof of Theorem \ref{G1}, since $g_{\epsilon} \in \mathcal{F}$ ensures that
\[ \prod_{k=1}^{N} \prod_{i=1}^{r_k} g_{\epsilon}(x^{(k)}_i) \leq C^{r} e^{cK(x^{(k)}_i)/2} \]
for some $C>0$ and all $(x^{(k)}_i) \in \mathbb{R}^{r}_{\geq 0}$. 
\end{proof}

Let $PL(n)$ denote the number of plane partitions with sum $n$. 
For example, there are $6$ plane partitions of $3$ 
\begin{eqnarray*}
3\qquad
\begin{matrix}
2 & 1 \\
\end{matrix}
\qquad
\begin{matrix}
1 & 1 & 1 \\
\end{matrix}
\qquad
\begin{matrix}
1 & 1 \\
1 & \\
\end{matrix}
\qquad
\begin{matrix}
2 \\
1 \\
\end{matrix}
\qquad
\begin{matrix}
1 \\
1 \\
1 \\
\end{matrix}
\end{eqnarray*}
and thus $PL(3)=6$. 

\begin{corollary}[Plane partition]
For all $N \in \mathbb{Z}_{\geq 1}$, one has
\[ e^{-H_N} \psi_{2}{(\zeta_N{(3)} N^2)} (\psi_{1}{(\zeta_N{(2)} N)})^{-1} \leq \sum_{n=0}^{N} PL(n) \leq \psi_{2}{(\zeta_N{(3)} N^2)} \psi_{1}{(\zeta_N{(2)} N)}. \]
\end{corollary}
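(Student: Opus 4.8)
The plan is to apply Theorem B with a suitable choice of $g$ so that the resulting product over $n$ matches the generating function for plane partitions. Recall that the generating function for plane partitions is the MacMahon formula
$$ \sum_{n=0}^{\infty} PL(n) z^n = \prod_{n=1}^{\infty} (1-z^n)^{-n}, $$
so the natural choice is $g(x) = x$. Since $g$ is increasing on $\mathbb{R}_{\geq -1}$, the remark following the proof of Theorem B tells us to take $g_{+}(x) = g(x+1) = x+1$ and $g_{-}(x) = g(x-1) = x-1$, both of which lie in $\mathcal{F}$ and are non-negative on $\mathbb{R}_{\geq 0}$ (for $g_-$ this requires interpreting $g_-(x) = \max(x-1,0)$ or noting the integrand vanishes appropriately; the one subtlety is that $g_-(x)=x-1<0$ for $x\in[0,1)$, so strictly one should take $g_-(x) = (x-1)^+$, which still satisfies $g(\lceil x\rceil)\geq g_-(x)\geq 0$ and lies in $\mathcal{F}$).

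The main computation is then to evaluate $w^{\epsilon}_k(s)$ and $\varphi^{g_\epsilon}(N)$. For $g_+(x)=x+1$ we get
$$ w^{+}_k(s) = \int_0^\infty (x+1) e^{-skx}\,dx = \frac{1}{(ks)^2} + \frac{1}{ks}, $$
so that $\lambda_+(s) = \sum_{k=1}^N \frac{w_k^+}{k} = \frac{\zeta_N(3)}{s^2} + \frac{\zeta_N(2)}{s}$. Hence
$$ \varphi^{g_+}(N) = \frac{1}{2\pi i}\lim_{T\to\infty}\int_{c-iT}^{c+iT} \frac{1}{s}\exp\!\Big(\frac{\zeta_N(3)}{s^2} + \frac{\zeta_N(2)}{s}\Big) e^{Ns}\,ds. $$
Expanding $\exp(\zeta_N(3)s^{-2})$ and $\exp(\zeta_N(2)s^{-1})$ as power series in $s^{-1}$ and integrating term by term using Lemma \ref{psi} (with the identity $\frac{1}{2\pi i}\lim_T \int_{c-iT}^{c+iT} s^{-m-1}e^{Ns}\,ds = N^m/m!$), one finds
$$ \varphi^{g_+}(N) = \sum_{a,b\geq 0} \frac{\zeta_N(3)^a \zeta_N(2)^b}{a!\,b!}\cdot\frac{N^{2a+b}}{(2a+b)!}. $$
The reindexing / bounding step is to show this double sum is $\leq \psi^0_2(\zeta_N(3)N^2)\,\psi^0_1(\zeta_N(2)N) = \big(\sum_a \frac{\zeta_N(3)^a N^{2a}}{a!(2a)!}\big)\big(\sum_b \frac{\zeta_N(2)^b N^b}{b!\,b!}\big)$, which follows from $\frac{1}{(2a+b)!}\leq \frac{1}{(2a)!\,b!}$ (a binomial-coefficient inequality). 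This yields the claimed upper bound, $\sum_{n=0}^N PL(n) \leq \varphi^{g_+}(N) \leq \psi^0_2(\zeta_N(3)N^2)\psi^0_1(\zeta_N(2)N)$.

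For the lower bound, with $g_-(x) = (x-1)^+$ we have $w^{-}_k(s) = \int_1^\infty (x-1)e^{-skx}\,dx = \frac{e^{-ks}}{(ks)^2}$, so $\lambda_-(s) = \sum_{k=1}^N \frac{e^{-ks}}{k^3 s^2}$. This is not of the clean form above, so instead I would follow the trick used at the end of Section \ref{estimate for partition function}: rather than invoke the $g_-$ half of Theorem B directly, bound $\sum_{n=0}^N PL(n) = I[\exp(\sum_{k=1}^N w_k/k)](N)$ from below by $e^{-1}N^{-1} I[\exp(\sum_{k=1}^N (g(0)+w_k)/k)](N)$ using $\sum_{k=1}^N 1/k \leq 1+\ln N$ and $g(0)=0$, then apply the inequality $I[\prod w_k^{r_k}](N) \geq I'[\prod (w'_k)^{r_k}](N)$ with a convenient choice. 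Actually the cleanest route for the lower bound is: since $g(0)=0$, the $e^{-g(0)}N^{-g(0)}$ factor is $1$, and one wants $\varphi^{g_-}(N) \geq \psi^0_2(\zeta_N(3)N^2)(\psi^0_1(\zeta_N(2)N))^{-1}$ after the $e^{-1}N^{-1}$ from... wait — the stated lower bound has the $e^{-1}N^{-1}$ already, matching $e^{-g(0)}N^{-g(0)}$ only if we instead arrange $g$ so that $g(0)=1$. The resolution: to get a clean lower bound one replaces the role of $h$-type shifting and instead applies Theorem B to a shifted function, or more simply applies the $\epsilon=-$ case of Theorem B after the elementary estimate $\sum_{n\leq N}PL(n)\geq e^{-1}N^{-1}I[\exp(\sum_{k=1}^N\frac{1+w_k}{k})](N)$, then estimates the latter using a majorant with $g_-(x)=x$ — legitimate because for the modified chain $1+w_k$ the relevant lattice sum runs over $\mathbb{Z}_{\geq 0}$ and $g(\lceil x\rceil)=\lceil x\rceil \geq x = g_-(x)$. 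With $g_-(x)=x$, $w_k^-(s) = 1/(ks)^2$, $\lambda_-(s) = \zeta_N(3)/s^2$, and $\varphi^{g_-}(N) = \psi^0_2(\zeta_N(3)N^2)$. The extra $\exp(\sum 1/k)$ bookkeeping contributes the $\psi^0_1(\zeta_N(2)N)^{-1}$ factor in the denominator via the relation between $I[\exp(\sum\frac{1}{k}+\cdots)]$ and a division; carefully tracking this gives $e^{-1}N^{-1}\psi^0_2(\zeta_N(3)N^2)(\psi^0_1(\zeta_N(2)N))^{-1} \leq \sum_{n=0}^N PL(n)$.

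The main obstacle I anticipate is handling the lower bound correctly: the factor $(\psi^0_1(\zeta_N(2)N))^{-1}$ in the denominator is not produced directly by Theorem B (whose lower bound has only an $e^{-g(0)}N^{-g(0)}$ prefactor), so one must instead run the modified argument — bounding $\prod(1-z^n)^{-n}$ from below by comparing with a simpler product, absorbing the discrepancy $\prod(1-z^n)^{-n}/\prod(1-z^n)^{-\lfloor\text{something}\rfloor}$ into the $\psi^0_1$ term — and verify that all the dominated-convergence and Fubini justifications from the proof of Theorem B still apply to the function $g_-(x)=x$ (which they do, since $x\in\mathcal{F}$ and $w_k^-(s)=1/(ks)^2$ satisfies the decay hypothesis with $\alpha=2$). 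The upper bound, by contrast, is essentially immediate once the binomial inequality $\binom{2a+b}{b}\geq 1$ is invoked.
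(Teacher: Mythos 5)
Your upper bound is workable and genuinely different from the paper's: you apply Theorem B directly to $g(x)=x$ with $g_{+}(x)=x+1$, getting $\lambda_{+}(s)=\zeta_N(3)s^{-2}+\zeta_N(2)s^{-1}$, and then dominate the resulting double series by the product $\psi^0_2(\zeta_N(3)N^2)\psi^0_1(\zeta_N(2)N)$ via $(2a+b)!\geq(2a)!\,b!$; the paper instead takes $g(x)=x-1$, $g_{+}(x)=x$, so that $\varphi^{g_+}=\psi^0_2(\zeta_N(3)N^2)$ comes straight from Lemma \ref{psi}, and recovers $\sum_{n\le N}PL(n)$ from $\sum_{n\le N}p^{g}(n)$ through the factorization $\prod(1-z^n)^{-n}=\prod(1-z^n)^{-(n-1)}\cdot\prod(1-z^n)^{-1}$ and the bound \eqref{goal1} on $\sum_{m\le N}p(m)$, which is where the $\psi^0_1$ factor comes from. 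Your version in fact gives a slightly sharper intermediate quantity, but note that the identity $\varphi^{g_+}(N)=\sum_{a,b}\frac{\zeta_N(3)^a\zeta_N(2)^b}{a!\,b!}\frac{N^{2a+b}}{(2a+b)!}$ is not literally covered by Lemma \ref{psi} (which treats a single exponent $as^{-u}$); you would need to redo its argument (growth estimate, term-by-term Laplace transform) for the two-parameter exponential, which is routine but must be said.

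The lower bound is where there is a genuine gap. The missing idea is the same shift-plus-convolution trick: the paper applies the $\epsilon=-$ case of Theorem B to $g(x)=x+1$ with $g_{-}(x)=x$, so that $w^-_k=(ks)^{-2}$, $\varphi^{g_-}(N)=\psi^0_2(\zeta_N(3)N^2)$, and $g(0)=1$ produces exactly the prefactor $e^{-1}N^{-1}$; since $\sum_{n\le N}p^{x+1}(n)=\sum_{n+m\le N}PL(n)p(m)\le\psi^0_1(\zeta_N(2)N)\sum_{n\le N}PL(n)$ by \eqref{goal1}, the factor $(\psi^0_1(\zeta_N(2)N))^{-1}$ appears by dividing out this ordinary-partition convolution — not as ``bookkeeping from $\exp(\sum 1/k)$'', which only yields $e^{-1}N^{-1}$. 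Your substitute (keep $g(x)=x$, shift $w_k\mapsto 1+w_k$, and minorize with $g_{-}(x)=x$ on the strength of $\lceil x\rceil\ge x$) does not go through: the machinery of Theorem B and Lemma \ref{ineq} requires the monomial inequalities $I[\prod_k(1+w_k)^{r_k}](N)\ge I'[\prod_k(w^-_k)^{r_k}](N)$, and these are false for your weights, because on each cell $[z,z+1)$ the lattice weight is $\max(z,1)$ while the integrand is $x>z$ (the $\epsilon=-$ argument really needs $g(\lfloor x\rfloor)\ge g_-(x)$, consistent with the remark $g_-(x)=g(x-1)$, not the ceiling condition you invoke). Concretely, for $\rho=1$, $r_k=1$, $k=100$, $N=299$ one gets $I[(1+w_k)](N)=1+1+2=4$ while $I'[(ks)^{-2}](N)=(N/k)^2/2\approx 4.47$, so the required per-monomial bound fails and your chain collapses at that step; indeed, had it worked it would prove the stronger bound $e^{-1}N^{-1}\psi^0_2(\zeta_N(3)N^2)\le\sum_{n\le N}PL(n)$ with no $(\psi^0_1)^{-1}$ factor, which should have been a warning sign.
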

\begin{proof}
By McMahon's formula, the generating function of $PL(n)$ is given by
$$ \sum_{n=0}^{\infty} PL(n) z^n = \prod_{n=1}^{\infty} {(1-z^{n} )}^{-n}. $$

We apply Theorem \ref{G2} with $g(x) = x+1, g_{-}(x) = x$. We now see that
\[ w_k(s) = s^{-2} k^{-2}, \quad \lambda_{-}(s) = \zeta_N{(3)} s^{-2} \]
and therefore
\[ e^{-H_N} \varphi^{g_-} (N) = e^{-H_N} \psi_{2}{(\zeta_N{(3)} N^2)} \leq \sum_{n=0}^{N} p^{g}(n) \]
by Lemma \ref{psi} $(u,a,t)=(2,\zeta_{N}{(3)},N)$.
On the other hand, the generating function of $p^{g}(n)$ is given by
\[ \prod_{n=1}^{\infty} {(1-z^{n} )}^{-(n+1)} = \left( \prod_{n=1}^{\infty} {(1-z^{n} )}^{-n} \right) \left( \prod_{n=1}^{\infty} {(1-z^{n} )}^{-1} \right) = \sum_{m,n=0}^{\infty} PL(m) p(n) z^{m+n}, \]
and thus
\[ \sum_{n=0}^{N} p^{g}(n) = \sum_{m+n \leq N} PL(m) p(n) \leq \left( \sum_{n=0}^{N} PL(n)\right) \left( \sum_{m=0}^{N} p(m) \right) \leq \psi_{1}{(\zeta_N{(2)} N)} \sum_{n=0}^{N} PL(n). \]
by Theorem \ref{P1}. This gives the lower bound for $\sum_{n=0}^{N} PL(n)$.

We apply Theorem \ref{G2} with $g(x) = x-1, g_{+}(x) = x$. Then 
\[ \varphi^{g_+} (N) = \psi_{2}{(\zeta_N{(3)} N^2)} \geq \sum_{n=0}^{N} p^{g}(n). \]
The generating function of $PL(n)$ is given by
\[ \prod_{n=1}^{\infty} {(1-z^{n} )}^{-n} = \left( \prod_{n=1}^{\infty} {(1-z^{n} )}^{-(n-1)} \right) \left( \prod_{n=1}^{\infty} {(1-z^{n} )}^{-1} \right) = \sum_{m,n=0}^{\infty} p^{g}(m) p(n) z^{m+n} \]
Thus, we have
\[ \sum_{n=0}^{N} PL(n) = \sum_{m+n \leq N} p^{g} (m) p(n) \leq \left( \sum_{m=0}^{N} p^g(m) \right) \left( \sum_{n=0}^{N} p(n) \right) \leq \psi_{2}{(\zeta_N{(3)} N^2)} \psi_{1}{(\zeta_N{(2)} N)} \]
This gives the upper bound. 

\end{proof}
Note that by Wright \cite{wrightI}, the asymptotic behavior of $PL$ is given by
$$ PL(n) = \frac{\zeta{(3)}^{7/36}}{\sqrt{12\pi}} \left( \frac{n}{2} \right)^{-25/36} \exp{\left(3\zeta{(3)}^{1/3} \left( \frac{n}{2} \right)^{2/3} + \zeta'{(-1)} \right)}(1 + o(1)). $$

\section*{Acknowledgments}
The author is deeply grateful to T. Mihara for his valuable comments and corrections, which significantly improved the paper.

\bibliographystyle{plain}
\nocite{*}
\bibliography{ref.bib}

@article{hardy-ramanujan,
    author = {Hardy, G. H. and Ramanujan, S.},
    title = {Asymptotic Formulaæ in Combinatory Analysis},
    journal = {Proceedings of the London Mathematical Society},
    volume = {s2-17},
    number = {1},
    pages = {75-115},
    year = {1918},
    month = {01},
    issn = {0024-6115},
    doi = {10.1112/plms/s2-17.1.75},
    url = {https://doi.org/10.1112/plms/s2-17.1.75},
    eprint = {https://academic.oup.com/plms/article-pdf/s2-17/1/75/4394266/s2-17-1-75.pdf},
}

@article{wrightI,
  title={Asymptotic partition formulae: I. Plane partitions},
  author={Wright, E Maitland},
  journal={The Quarterly Journal of Mathematics},
  number={1},
  pages={177--189},
  year={1931},
  publisher={Oxford University Press}
}

@article{wrightIII,
  title={Asymptotic partition formulae. III. Partitions into k-th powers},
  author={Wright, E Maitland},
  journal={Acta Mathematica},
  volume={63},
  number={1},
  pages={143--191},
  year={1934},
  publisher={Springer}
}

@book{davenport,
  title={Multiplicative number theory},
  author={Davenport, Harold},
  volume={74},
  year={2013},
  publisher={Springer Science \& Business Media}
}

@article{wrightBessel,
  title={The asymptotic expansion of the generalized Bessel function},
  author={Wright, E Maitland},
  journal={Proceedings of the London mathematical society},
  volume={2},
  number={1},
  pages={257--270},
  year={1935},
  publisher={Oxford University Press}
}

@book{van2001course,
  title={A course in combinatorics},
  author={Van Lint, Jacobus Hendricus and Wilson, Richard Michael},
  year={2001},
  publisher={Cambridge university press}
}

@article{kane2006elementary,
  title={An elementary derivation of the asymptotics of partition functions},
  author={Kane, Daniel M},
  journal={The Ramanujan Journal},
  volume={11},
  number={1},
  pages={49--66},
  year={2006},
  publisher={Springer}
}

@book{friedlander2025opera,
  title={Opera de cribro},
  author={Friedlander, John and Iwaniec, Henryk},
  volume={57},
  year={2025},
  publisher={American Mathematical Society}
}

\end{document}